\begin{document}  
\title[Alexander criterion for bi-orderability]{A remark on Alexander polynomial criterion for bi-orderability of fibered 3-manifold groups}
\author{Tetsuya Ito}
\address{Graduate School of Mathematical Science, University of Tokyo, 3-8-1 Komaba, Meguro-ku, Tokyo, 153-8914, Japan}
\email{tetitoh@ms.u-tokyo.ac.jp}
\urladdr{http://ms.u-tokyo.ac.jp/~tetitoh}
\subjclass[2010]{Primary~57M27, Secondary~57M05, 20F60}
\keywords{Bi-orderable group, twisted Alexander polynomial, fibered 3-manifolds}

%Difinition of theorem enviroment
 
\newtheorem{thm}{Theorem}
\newtheorem{ethm}{Expected generalization}
\newtheorem{cor}{Corollary}
\newtheorem{lem}{Lemma}
\newtheorem{prop}{Proposition}

 {\theoremstyle{definition}
  \newtheorem{defn}{Definition}}
 {\theoremstyle{definition}
  \newtheorem{exam}{Example}}
 {\theoremstyle{definition}
  \newtheorem{rem}{Remark}}

\newcommand{\Z}{\mathbb{Z}}
\newcommand{\Q}{\mathbb{Q}}
\newcommand{\GL}{\textrm{GL}}
\newcommand{\tF}{\widetilde{F}}
\newcommand{\tM}{\widetilde{M}}
\newcommand{\tS}{\widetilde{\Sigma}}
\newcommand{\ttheta}{\widetilde{\theta}}

%end of definition

\begin{abstract}
We observe that Clay-Rolfsen's obstruction of bi-orderability, which uses the classical Alexander polynomial, is not strengthened by using the twisted Alexander polynomials for finite representations unlike many known applications of the Alexander polynomial. This is shown by studying the maximal ordered abelian quotient of bi-ordered groups. 
\end{abstract}
\maketitle

\section{Introduction}

A total ordering $<$ of a group $G$ is a {\it bi-ordering} if the ordering $<$ is invariant under the multiplication of $G$ from both sides. That is, $f<g$ implies $hf <hg$ and $fh<gh$ for all $f,g,h \in G$. A group $G$ is called {\it bi-orderable} if $G$ has a bi-ordering.

Recently there are progresses on the problem to determine which low-dimensional manifold has the bi-orderable fundamental group. For $2$-dimensional manifolds and Seifert fibered 3-manifolds, this bi-orderability problem is completely solved in \cite{brw}. In this paper we consider the bi-orderability problem for fibered 3-manifold groups.

In \cite{pr},\cite{pr2}, Perron and Rolfsen showed that for a fibered 3-manifold $M$, if the all roots of the classical Alexander polynomial are real and positive, then the fundamental group of $M$ is bi-orderable. Recently, Clay and Rolfsen showed the partial converse.

\begin{thm}[Clay-Rolfsen's obstruction \cite{cr}]
\label{thm:clay-rolfsen}
Let $M$ be an orientable fibered 3-manifold and $\phi: \pi_{1}(M) \rightarrow \Z$ be a surjective homomorphism induced by a fibration map $M \rightarrow S^{1}$. If $\pi_{1}(M)$ is bi-orderable, then the Alexander polynomial $\Delta_{M}^{\phi}(t)$ has at least one positive real root.   
\end{thm}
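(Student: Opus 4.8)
The plan is to exploit the fibered structure to reduce the statement to a purely order-theoretic fact about automorphisms of bi-ordered groups. Writing $F$ for the fiber and $K=\pi_{1}(F)=\ker\phi$, the fibration expresses $\pi_{1}(M)$ as a semidirect product $K\rtimes_{\theta}\Z$, where $\theta\colon K\to K$ is the monodromy automorphism, realized as conjugation by any $t\in\pi_{1}(M)$ with $\phi(t)=1$. Since the infinite cyclic cover associated to $\phi$ is homotopy equivalent to $F$ with deck group acting by $\theta$, the Alexander polynomial satisfies $\Delta_{M}^{\phi}(t)\doteq\det(tI-\theta_{*})$, where $\theta_{*}$ is the induced automorphism of $H_{1}(F;\Q)$; its roots are exactly the eigenvalues of $\theta_{*}$. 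Thus it suffices to produce a positive real eigenvalue of $\theta_{*}$.

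First I would record the key order-theoretic input. A bi-ordering on $\pi_{1}(M)$ restricts to a bi-ordering on $K$, and because a bi-ordering is invariant under conjugation ($f<g$ implies $hfh^{-1}<hgh^{-1}$ for all $h$), the monodromy $\theta$, being conjugation by $t$, is an \emph{order-preserving} automorphism of the bi-ordered group $K$. The statement therefore reduces to the following lemma: if $(K,<)$ is a finitely generated bi-ordered group and $\theta$ is an order-preserving automorphism, then $\theta_{*}$ acting on $H_{1}(K;\Q)$ has a positive real eigenvalue.

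To prove the lemma I would pass to the maximal Archimedean ordered abelian quotient of $K$. The convex subgroups of a left-ordered group form a chain, and since $K$ is finitely generated the union of all proper convex subgroups is again a proper convex subgroup $C$; thus $C$ is the unique maximal proper convex subgroup, and $A:=K/C$ is Archimedean. By H\"{o}lder's theorem $A$ is abelian and admits an order-embedding $\iota\colon A\hookrightarrow\mathbb{R}$. Being canonical, $C$ is invariant under the order-automorphism $\theta$, so $\theta$ descends to an order-automorphism $\bar{\theta}$ of $A$; identifying $A$ with $\iota(A)\subset\mathbb{R}$, any order-automorphism of a subgroup of $\mathbb{R}$ extends to a monotone additive self-map of $\mathbb{R}$ and is therefore given by multiplication by a positive real number $\lambda$.

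Finally I would convert this scalar into an eigenvalue. Since $A$ is abelian, the quotient $q\colon K\to A$ factors through $H_{1}(K;\Z)$, so $\iota\circ q$ extends to a nonzero linear functional $f\colon H_{1}(K;\Q)\to\mathbb{R}$ satisfying $f\circ\theta_{*}=\lambda f$ by $\theta$-equivariance. Hence $\lambda$ is an eigenvalue of the transpose of $\theta_{*}$, and therefore of $\theta_{*}$ itself (equivalently, the characteristic polynomial of $\bar{\theta}$ divides $\Delta_{M}^{\phi}$), giving $\Delta_{M}^{\phi}(\lambda)=0$ with $\lambda>0$. The main obstacle is precisely this lemma, and within it the construction of $A$: one must ensure that $A$ is nonzero (guaranteed here by finite generation of $K$ together with $H_{1}(F)\neq0$, so that $F$ is not a sphere or disk), that it is canonical so as to be $\theta$-invariant, and that its Archimedean structure via H\"{o}lder's theorem yields an honest positive eigenvalue of $\theta_{*}$ on the \emph{full} homology rather than merely on a subquotient.
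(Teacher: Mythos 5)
Your proof is correct and follows essentially the same route as the paper's review of Clay--Rolfsen's argument: the bi-ordering restricts to a monodromy-invariant ordering on the fiber group, a canonical convex subgroup yields an order-automorphism of an abelian quotient with a positive real eigenvalue, and this eigenvalue is a root of $\det(tI-\theta_{*})=\Delta_{M}^{\phi}(t)$. The only cosmetic difference is that you inline the key eigenvalue lemma by applying H\"{o}lder's theorem directly to the maximal Archimedean quotient $K/C$, whereas the paper factors through the maximal ordered abelian quotient $A(F,<_{F})$ and quotes that lemma (together with the existence of the maximal proper convex subgroup) from \cite{cr}.
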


There is a generalization of the Alexander polynomial called the {\it twisted Alexander polynomial}, which is defined for each finite dimensional representation of the fundamental group.
Many results on 3-manifolds which uses the Alexander polynomial can be generalized by using twisted Alexander polynomials in a rather simple way (See section 3.3). Moreover, in most cases generalized arguments are stronger than the original argument. For example,  in \cite{fv2} it is shown that the twisted Alexander polynomial gives necessary and sufficient condition for 3-manifolds to be fibered by generalizing the classical Alexander polynomial criterion. 

Therefore it is natural to try to generalize Clay-Rolfsen's result for the twisted Alexander polynomials. The aim of this paper is to show the negative result: The twisted Alexander polynomial cannot be used to strengthen Clay-Rolfsen's obstruction. 

\begin{thm}
\label{thm:main-alex}
Let $M$ be a fibered 3-manifold whose fundamental group is bi-orderable. 
Then the twisted Alexander polynomial for a finite representation contains the same or less information about bi-ordering of $\pi_{1}(M)$ compared with the classical Alexander polynomial. 
\end{thm}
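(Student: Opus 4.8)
The plan is to show that, for a bi-orderable fibered $3$-manifold group, the twisted Alexander polynomial of any finite representation already has a positive real root, so that the twisted analogue of Theorem~\ref{thm:clay-rolfsen} is automatically satisfied for bi-orderable $M$ and therefore detects nothing beyond the classical criterion. First I would fix the fibered structure and reduce the statement to a statement about eigenvalues of a monodromy. Writing $\pi_{1}(M)\cong\pi_{1}(\Sigma)\rtimes_{h}\Z$ for the fiber $\Sigma$ and monodromy $h$, and letting $\rho\colon\pi_{1}(M)\to\GL_{n}(\mathbb{C})$ have finite image, the twisted Alexander polynomial $\Delta_{M}^{\phi,\rho}(t)$ agrees up to units with the characteristic polynomial of the map induced by $h$ on the $\rho$-twisted first homology $V=H_{1}(\Sigma;\mathbb{C}^{n}_{\rho})$ of the fiber (divided by the degree-zero term). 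Thus the twisted criterion becomes the assertion that this monodromy map has a positive real eigenvalue, and it suffices to produce one.

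Second, I would use finiteness of $\rho$ to place $V$ in a setting where a bi-order can act. Since $\rho|_{\pi_{1}(\Sigma)}$ factors through a finite quotient $\pi_{1}(\Sigma)/K$, the space $V$ is identified with a summand of $H_{1}(\tS;\mathbb{R})\otimes\mathbb{C}$, where $\tS\to\Sigma$ is the finite regular cover corresponding to $K$; this is a fiber of a finite cover $\tM\to M$ whose group $\pi_{1}(\tM)$ is a finite-index subgroup of $\pi_{1}(M)$, hence again bi-orderable. Here I would invoke the tool advertised in the abstract: a bi-ordering of $\pi_{1}(\tM)$ (equivalently of $\pi_{1}(\tS)$) pushes forward to a translation-invariant total order on its maximal ordered abelian quotient, and after tensoring with $\mathbb{R}$ this yields a nonzero proper convex cone $P\subset H_{1}(\tS;\mathbb{R})$ (the positive cone of the induced order, so $P\cap(-P)=\emptyset$) that is carried into itself by the monodromy $\ttheta$ of $\tM$, because $\ttheta$ is realized by conjugation inside the bi-ordered group and conjugation preserves the order.

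Finally I would run the linear-algebra step: an automorphism of a finite-dimensional real vector space that preserves a nonzero proper convex cone has a positive real eigenvalue (a Perron--Frobenius / Brouwer fixed-point argument applied to the self-map induced on the projectivized closed cone). This recovers Theorem~\ref{thm:clay-rolfsen} for $\tM$, giving a positive real root of the classical Alexander polynomial of $\tM$; accounting then for the standard relation between $\Delta_{M}^{\phi,\rho}$ and the Alexander polynomial of $\tM$, which involves only a substitution $t\mapsto t^{d}$ and scaling by roots of unity, one transfers the positivity back to a positive real root of $\Delta_{M}^{\phi,\rho}(t)$, since the positive real $d$-th root of a positive real number is again positive and only the trivial root of unity $\zeta=1$ keeps a positive real number real.

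The main obstacle --- and the reason the conclusion is only ``the same or less'' information rather than a full equivalence --- is the passage from the whole covering homology $H_{1}(\tS;\mathbb{R})$ to the single $\rho$-isotypic summand $V$ cut out by the chosen representation. The preserved cone $P$ produced by the bi-order lives on all of $H_{1}(\tS;\mathbb{R})$, and the positive real eigenvalue it forces need not a priori lie in $V$: a product of twisted polynomials over the irreducible constituents of the regular representation can acquire its positive root from a constituent other than $\rho$. The delicate point, where the structure of the maximal ordered abelian quotient must really be used, is to check that $P$ is compatible with the deck action (hence with the isotypic decomposition), so that the positivity localizes to the correct factor; failing that, one settles for the statement at the level of the full covering polynomial, which is precisely the assertion that the finite-representation data reduces to classical Alexander polynomials of finite covers and so encodes no bi-ordering information beyond the classical invariant.
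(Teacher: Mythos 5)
Your opening plan --- to show that the twisted Alexander polynomial of \emph{every} finite representation of a bi-orderable fibered group already has a positive real root --- cannot work: it is precisely the paper's first ``Expected generalization,'' and the figure-eight knot refutes it. There the regular representation of $\Z_2$ splits off a nontrivial one-dimensional summand contributing the factor $t^2+3t+1$, whose roots are both negative, even though the group is bi-orderable. You do anticipate this failure in your last paragraph (the positive eigenvalue forced by the preserved cone need not lie in the $\rho$-isotypic summand), and you correctly retreat to the regular representation, where Shapiro's lemma (Lemma \ref{lem:shapiro}) together with Lemma \ref{lem:changeabel} identifies $\Delta_M^{\alpha\otimes\phi}(t)$ with $\Delta^{\widetilde{\phi}}_{\tM}(t^d)$, the classical Alexander polynomial of the finite cover; up to that point you are on the paper's route. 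A smaller but real inaccuracy along the way: the positive cone produced by the bi-ordering does not live on all of $H_1(\tS;\mathbb{R})$, because the commutator subgroup of $\tF$ need not be convex and the abelianization need not inherit an order. It lives only on the summand $A(\tF,<_{\tF})\otimes\mathbb{R}$ cut out by the convex commutator subgroup (Lemma \ref{lem:maxoabel}), which is exactly why Clay--Rolfsen's Lemma \ref{lem:key} is stated for ordered abelian groups rather than for $H_1$ of the fiber.

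The genuine gap is in your final fallback. Observing that ``the finite-representation data reduces to classical Alexander polynomials of finite covers'' does not prove the theorem: a priori the classical Alexander polynomial of the cover $\tM$ could carry strictly \emph{more} bi-ordering information than that of $M$, since the bi-ordering-relevant factor is the characteristic polynomial of the monodromy on $A(\tF,<_{\tF})\otimes\Q$, and this quotient could conceivably be larger for the finite-index subgroup $\tF$ than $A(F,<_F)\otimes\Q$ is for $F$ --- just as $H_1$ itself typically grows under finite covers. The paper's entire point is to close exactly this gap with Theorem \ref{thm:main}: for a finite-index subgroup the inclusion induces an isomorphism $A(\tF,<_{\tF})\otimes\Q\cong A(F,<_F)\otimes\Q$, proved via the commutator inequalities of Lemma \ref{lem:comm}, which give $C_{\tF}=C_F\cap\tF$. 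This yields the commutative square identifying $\ttheta_*^A$ with $\theta_*^A$, so the essential factor of the twisted polynomial is literally the essential factor of the classical one. Without this rigidity statement your argument establishes only that twisted polynomials are covering-space Alexander polynomials, not that they contain ``the same or less'' bi-ordering information.
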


During the proof of Theorem \ref{thm:main-alex} which will be given in section 3, we will clarify which factor of the twisted Alexander polynomial contains the information of bi-orderings. We will show that this essential factor of the twisted Alexander polynomial is determined by the essential part of the classical Alexander polynomials. 

To prove Theorem \ref{thm:main-alex}, in Section 2 we study the {\it maximal ordered abelian quotient} of bi-ordered groups and show the following result.

\begin{thm}
\label{thm:main}
Let $(G,<_{G})$ be a bi-ordered group and $H$ be a finite index subgroup of $G$. Let $<_{H}$ be a bi-ordering of $H$ defined by the restriction of the ordering $<_{G}$ to $H$. Then the natural inclusion map of bi-ordered groups $i: (H,<_{H}) \hookrightarrow (G,<_{G})$ induces an isomorphism of $\Q$-vector space 
\[ A(H,<_{H}) \otimes \Q \rightarrow A(G,<_{G}) \otimes \Q. \]
\end{thm}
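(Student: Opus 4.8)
The plan is to use the description of the maximal ordered abelian quotient as $A(G,<_{G})=G/C_{G}$, where $C_{G}$ denotes the convex closure of the commutator subgroup $[G,G]$ with respect to $<_{G}$, i.e. the smallest convex subgroup containing $[G,G]$. This is the minimal normal subgroup for which the quotient is abelian and still inherits a bi-ordering, so $G/C_{G}$ is an ordered abelian, hence torsion-free, group. Writing $C_{H}$ for the analogous convex closure of $[H,H]$ inside $(H,<_{H})$, I would first observe that $C_{G}\cap H$ is a convex subgroup of $H$ containing $[H,H]$, so $C_{H}\subseteq C_{G}\cap H$, and the inclusion descends to a homomorphism $\bar\imath\colon A(H,<_{H})=H/C_{H}\to G/C_{G}=A(G,<_{G})$. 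Since $A(H,<_{H})$ is torsion-free, the functor $-\otimes\Q$ is faithful on it, so $\bar\imath\otimes\Q$ is injective if and only if $\bar\imath$ is, that is, if and only if $C_{H}=C_{G}\cap H$.

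For surjectivity after tensoring I would note that the image of $\bar\imath$ is the subgroup $HC_{G}/C_{G}$, whose index in $G/C_{G}$ equals $[G:HC_{G}]$ and hence divides $[G:H]$. Thus the cokernel of $\bar\imath$ is a finite abelian group, which is killed by $-\otimes\Q$, and so $\bar\imath\otimes\Q$ is onto.

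The core of the argument is injectivity, namely the reverse inclusion $C_{G}\cap H\subseteq C_{H}$, and I would first reduce it to commutators. Putting $n=[G:H]$ and $d=n!$, the left action of $G$ on the $n$ cosets $G/H$ shows that $g^{d}\in H$ for every $g\in G$. Given $w\in C_{G}\cap H$ with $w>1$, convexity provides $b\in[G,G]$ with $1<w\le b$, and then $b^{d}\in[G,G]\cap H$ satisfies $w\le b\le b^{d}$; since $1,b^{d}\in C_{H}$ would force $w\in C_{H}$ by convexity, it suffices to prove $[G,G]\cap H\subseteq C_{H}$, i.e. that every product of $G$-commutators lying in $H$ is order-bounded by an element of $[H,H]$. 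For a single commutator $[x,y]$ the idea is to compare it with $[x^{d},y^{d}]\in[H,H]$, the point being that these differ only by elements lying deeper in the lower central series of $G$, which are order-small, so $[x,y]$ agrees with a suitable power of an element of $[H,H]$ modulo something already in $C_{H}$.

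The main obstacle is exactly this last comparison: converting the heuristic ``$[x^{d},y^{d}]$ equals $[x,y]^{d^{2}}$ modulo higher commutators'' into an honest order estimate in an arbitrary bi-ordered group, where no Magnus embedding or Lie-algebra grading is available a priori. I expect to handle this by working along the family of convex subgroups of $(G,<_{G})$: passing to the successive Archimedean quotients turns each correction term into a comparison in an ordered abelian group, where finite index forces it into the convex closure of the image of $[H,H]$, and an induction up the convex series then yields $C_{G}\cap H\subseteq C_{H}$, completing the proof.
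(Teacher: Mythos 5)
Your proposal has the same skeleton as the paper's proof: both reduce the theorem to the equality $C_{H}=C_{G}\cap H$ (equivalently, to showing that every commutator of $G$ is sandwiched between $[h,h']^{-1}$ and $[h,h']$ for some commutator of elements of $H$), and both then conclude via $H/(C_{G}\cap H)\cong HC_{G}/C_{G}$ together with the finiteness of $[G:HC_{G}]$, which makes the cokernel vanish after tensoring with $\Q$. The reduction to a single commutator $[x,y]$, compared against $[x^{d},y^{d}]$ with $g^{d}\in H$ for all $g$, is also exactly the paper's route. So the surrounding bookkeeping is fine.

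The genuine gap is the step you yourself flag as ``the main obstacle'': the order estimate comparing $[x,y]$ with $[x^{d},y^{d}]$. Your proposed justification --- that $[x^{d},y^{d}]$ equals $[x,y]^{d^{2}}$ modulo terms deeper in the lower central series, which are ``order-small,'' to be controlled by an induction over Archimedean quotients of the convex series --- is not carried out, and it is doubtful it can be made to work as stated: in an arbitrary bi-ordered group there is no reason for elements of the third term of the lower central series to be small relative to $[x,y]$ (that is a feature of the Magnus-type orderings on free groups, not of a general bi-ordering), and the chain of convex subgroups need not be well-ordered, so ``induction up the convex series'' is not available without further argument. The paper closes exactly this gap with a short elementary computation (Lemma \ref{lem:comm}): bi-invariance gives $[a,b]<_{G}b$ when $b>_{G}1$ and $[a,b]>_{G}a^{-1}$ when $a>_{G}1$, and the identity $[a,b^{m}]=[a,b^{m-1}]\,[a,b]\,[[a,b],b^{m-1}]$ then yields, by induction on $m$ (and symmetrically on $n$), that $[a^{n},b^{m}]>_{G}[a,b]$ whenever $[a,b]>_{G}1$. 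Applied with $n=m=d$ this gives the required sandwich $[h,h']^{-1}<_{G}[g,g']<_{G}[h,h']$ with $h=g^{d}$, $h'=(g')^{d}\in H$, and no passage through the lower central series or Archimedean quotients is needed. You should replace your heuristic paragraph with this inequality (or an equivalent one); as written, the central claim of your argument is unproved.
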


This result is interesting in its own right. Theorem \ref{thm:main} represents a rigidity of bi-ordered groups, and shows that the behavior of the rank of the maximal ordered abelian quotient is quite different from the rank of the maximal abelian quotient.  

Though our main result is a negative result, but is still interesting because it provides an example of the non-effectiveness of twisted Alexander polynomials. 
Moreover, Theorem \ref{thm:main} shows that taking a finite cover, one of the most useful techniques to study 3-manifolds, is not useful to study bi-orderability. 
This clarifies the difficulty of the bi-orderability problem in low-dimensional manifold groups. \\

\textbf{Acknowledgments.}
The author would like to thank Takahiro Kitayama and Hiroshi Goda for many useful conversation and advices for twisted Alexander polynomials. He also wishes to express his thanks to Dale Rolfsen and Adam Clay for introducing their results to the author.
This research was supported by JSPS Research Fellowships for Young Scientists.

\section{The maximal ordered abelian quotient}

In this section we study the maximal ordered abelian quotient of bi-ordered group $(G,<_{G})$. 

A subgroup $H$ of $G$ is {\it convex} with respect to the ordering $<_{G}$ if for $h,h' \in H$ and $g \in G$, the inequality $h <_{G} g <_{G} h'$ implies $g \in H$. 
Assume that $H$ is a convex, normal subgroup.
Then the quotient group $G \slash H$ has a bi-ordering $<_{G \slash H}$ induced by the ordering $<_{G}$, defined by
$xH <_{G \slash H} yH$ if $x<_{G}y$ where $x,y \in G$ are arbitrary chosen coset representatives. 
We say the bi-ordered group $(G\slash H, <_{G\slash H})$ {\it the ordered quotient group}.

Now we introduce the {\it convex commutator subgroup}, which is a refinement of the commutator subgroup in the category of bi-ordered groups.
\begin{defn}
 The {\it convex commutator subgroup} of a bi-ordered group $(G,<)$ is a convex subgroup $C_{G}$ defined as the intersection of all convex subgroups of $(G,<_{G})$ which contains $[G,G]$.
\end{defn}

Since intersection of convex subgroups is a convex subgroup, $C_{G}$ is the minimal convex subgroup of $(G,<_{G})$ which contains the commutator subgroup $[G,G]$. First we show basic properties of $C_{G}$.

\begin{lem}
\label{lem:conv}
Let $(G,<_{G})$, $(H,<_{H})$ be bi-ordered groups and $\theta: (G,<_{G}) \rightarrow (H,<_{H})$ be an order-preserving homomorphism. 
\begin{enumerate}
\item $\theta(C_{G}) \subset C_{H}$.
\item $C_{G}$ is a normal subgroup of $G$. 
\end{enumerate}
\end{lem}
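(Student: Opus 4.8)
The plan is to derive both parts from a single observation: the preimage of a convex subgroup under an order-preserving homomorphism is again a convex subgroup, so the minimality of $C_{G}$ among convex subgroups containing $[G,G]$ can be applied directly.

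For part (1), I would consider the preimage $\theta^{-1}(C_{H}) \subset G$ and show it is a convex subgroup of $(G,<_{G})$ containing $[G,G]$. It is a subgroup since it is the preimage of a subgroup, and it contains $[G,G]$ because a homomorphism sends commutators to commutators, so $\theta([G,G]) \subset [H,H] \subset C_{H}$. For convexity, suppose $a,a' \in \theta^{-1}(C_{H})$ and $g \in G$ satisfy $a <_{G} g <_{G} a'$; applying the order-preserving map $\theta$ yields $\theta(a) \leq_{H} \theta(g) \leq_{H} \theta(a')$, and since $\theta(a),\theta(a') \in C_{H}$ with $C_{H}$ convex (the boundary cases $\theta(g)=\theta(a)$ or $\theta(g)=\theta(a')$ being immediate), we conclude $\theta(g) \in C_{H}$, that is, $g \in \theta^{-1}(C_{H})$. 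Thus $\theta^{-1}(C_{H})$ is a convex subgroup containing $[G,G]$, so by the minimality of $C_{G}$ noted after the definition, $C_{G} \subset \theta^{-1}(C_{H})$, which is exactly the desired inclusion $\theta(C_{G}) \subset C_{H}$.

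For part (2), the crucial point is that in a \emph{bi}-ordered group every inner automorphism is order-preserving: if $x <_{G} y$, then left-invariance gives $gx <_{G} gy$ and then right-invariance gives $gxg^{-1} <_{G} gyg^{-1}$. Hence for each $g \in G$ the conjugation map $c_{g}\colon (G,<_{G}) \to (G,<_{G})$ is an order-preserving automorphism, and I would apply part (1) with $\theta = c_{g}$ and $H = G$ to obtain $c_{g}(C_{G}) \subset C_{G}$, i.e.\ $gC_{G}g^{-1} \subset C_{G}$. Running the same argument with $g^{-1}$ in place of $g$ gives the reverse inclusion, so $gC_{G}g^{-1} = C_{G}$ for all $g \in G$, proving that $C_{G}$ is normal.

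The argument is largely formal, and the only genuine subtlety—really the conceptual heart of the lemma—is the observation used in part (2) that conjugation respects the ordering; this is precisely where bi-invariance (as opposed to mere left-invariance) is indispensable, and it is what allows part (2) to be reduced to part (1). A secondary point requiring a little care is the passage through the possibly non-strict inequality $\theta(a) \leq_{H} \theta(g) \leq_{H} \theta(a')$ in the convexity argument, which is handled by treating the equality cases separately.
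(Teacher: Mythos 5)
Your proof is correct. Part (2) is identical to the paper's: both observe that bi-invariance makes every inner automorphism order-preserving and then invoke part (1). For part (1), however, you take a genuinely different and in fact cleaner route. The paper works with the \emph{image}: it introduces the minimal convex subgroup $N$ of $H$ containing $\theta([G,G])$, asserts that $\theta(C_{G})$ is a convex subgroup of $(H,<_{H})$, and deduces $\theta(C_{G})=N\subset C_{H}$ via a contradiction involving $\theta^{-1}(N)$. That intermediate assertion is delicate: the image of a convex subgroup under an order-preserving homomorphism need not be convex in the target (e.g.\ $2\Z$ is convex in itself but not in $\Z$ under the inclusion), so the paper's argument as written leans on a step that requires more justification. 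Your argument works with the \emph{preimage} instead: $\theta^{-1}(C_{H})$ is a convex subgroup of $G$ containing $[G,G]$, so minimality of $C_{G}$ gives $C_{G}\subset\theta^{-1}(C_{H})$ in one step. Preimages of convex subgroups under order-preserving maps are always convex (you correctly handle the non-strict inequalities that arise when $\theta$ is not injective), so your version buys a shorter proof that avoids the questionable convexity-of-images claim entirely, at the cost of not identifying $\theta(C_{G})$ exactly with the minimal convex subgroup over $\theta([G,G])$ --- but that sharper statement is not needed anywhere in the paper.
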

\begin{proof}
Let $N$ be the intersection of all convex subgroups of $(H,<_{H})$ which contains $\theta([G,G])$. Since $\theta([G,G]) \subset [H,H]$, $N \subset C_{H}$.
$\theta$ is order-preserving, so $\theta(C_{G})$ is a convex subgroup of $(H,<_{H})$ which contains $\theta([G,G])$. Hence $\theta(C_{G}) \supset N$. If $N$ is a proper subgroup of $\theta(C_{G})$, $\theta^{-1}(N)$ is a proper convex subgroup of $C_{G}$ which contains $[G,G]$, so it is a contradiction. Thus, we conclude $\theta(C_{G}) = N \subset C_{H}$.

To show (2), observe that every inner automorphism of $G$ preserves a bi-ordering $<_{G}$. Hence by (1), $C_{G}$ is preserved by all inner automorphisms, so $C_{G}$ is normal.
\end{proof}
  
Now we are ready to define the maximal ordered abelian quotient.
\begin{defn}
For a bi-ordered group $(G,<_{G})$, {\it the maximal ordered abelian quotient group} of $(G,<)$ is an ordered quotient group
\[ A(G,<_{G}) = (G\slash C_{G}, <_{G \slash C_{G}}). \] 
\end{defn}

$A(G,<_{G})$ plays a similar role of the maximal abelian quotient in the category of bi-ordered groups.
 
\begin{lem}
\label{lem:maxoabel}
Let $(G,<_{G})$, $(H,<_{H})$ be bi-ordered groups and $\theta: (G,<_{G}) \rightarrow (H,<_{H})$ be an order-preserving homomorphism.
\begin{enumerate}
\item $\theta$ induces a well-defined order-preserving homomorphism $\theta_{*}: A(G,<_{G}) \rightarrow A(H,<_{H})$. 
\item There exists a direct sum decomposition of the $\Q$-vector space $H_{1}(G;\Q) = (A(G,<_{G}) \otimes \Q) \oplus V_{G}$ which is preserved by any order-preserving automorphisms of $G$. 
\item $A(G,<_{G})$ is non-trivial.
\end{enumerate}
\end{lem}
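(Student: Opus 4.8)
The three assertions can be handled in increasing order of difficulty, and I would dispatch (1) and (3) first, reserving the bulk of the effort for (2). For (1), the plan is to combine the universal property of the quotient with Lemma \ref{lem:conv}(1). Since $\theta(C_{G}) \subset C_{H}$, the composite $G \xrightarrow{\theta} H \twoheadrightarrow H/C_{H}$ kills $C_{G}$ and hence factors through $G/C_{G} = A(G,<_{G})$, which is exactly the well-definedness of $\theta_{*}\colon A(G,<_{G}) \to A(H,<_{H})$. Order-preservation is then inherited from the definition of the quotient ordering: if $xC_{G} <_{A(G)} yC_{G}$ then $x <_{G} y$ for suitable coset representatives, whence $\theta(x) \leq_{H} \theta(y)$ and therefore $\theta_{*}(xC_{G}) \leq_{A(H)} \theta_{*}(yC_{G})$. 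I expect this step to be routine.

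For (3), nontriviality of $A(G,<_{G})$ is equivalent to $C_{G}$ being a \emph{proper} subgroup of $G$, i.e.\ to the existence of some proper convex subgroup containing $[G,G]$. The plan is to produce one from the classical theory of convex subgroups: those of $(G,<_{G})$ form a chain, and I would pass to the maximal proper convex subgroup $C$ (the union of all proper convex subgroups, which is again convex and, for finitely generated $G$, proper). Being the top jump of a bi-ordered group, $C$ is normal and $G/C$ is archimedean, hence by H\"older's theorem order-isomorphic to a nontrivial subgroup of $(\mathbb{R},+)$; in particular $G/C$ is abelian, so $[G,G] \subset C \subsetneq G$ and thus $C_{G} \subset C \subsetneq G$. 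This simply records that a finitely generated nontrivial bi-orderable group is never perfect. The one point needing care is the existence of a maximal proper convex subgroup; this is automatic in the finitely generated case, which is the setting of the intended $3$-manifold application, and I would isolate the general case as a separate remark.

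Part (2) is where the real work lies. The natural candidate is to set $V_{G}$ to be the image of $C_{G}$ in $H_{1}(G;\Q) = G^{\mathrm{ab}} \otimes \Q$, so that $V_{G}$ is precisely the kernel of the surjection $p\colon H_{1}(G;\Q) \to A(G,<_{G}) \otimes \Q$ induced by $G \twoheadrightarrow G/C_{G}$. This $V_{G}$ is manifestly canonical, and it is preserved by every order-preserving automorphism $\phi$ of $G$: applying Lemma \ref{lem:conv}(1) to $\phi$ and to $\phi^{-1}$ gives $\phi(C_{G}) = C_{G}$, hence $\phi_{*}(V_{G}) = V_{G}$. A complement to $V_{G}$ mapping isomorphically onto $A(G,<_{G}) \otimes \Q$ exists for free over $\Q$: because $A(G,<_{G})$ is bi-orderable and therefore torsion-free, $V_{G}$ is a pure subspace, so $p$ splits. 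The whole content of (2) is thus to choose such a complement $W_{G}$ that is simultaneously invariant under the entire group $\Gamma$ of order-preserving automorphisms, equivalently to produce a $\Gamma$-equivariant splitting of $p$.

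This equivariant splitting is the main obstacle. Since $\Gamma$ may be infinite one cannot average, and a short computation shows that a $\Gamma$-invariant complement exists if and only if the $\Gamma$-fixed subspace $H_{1}(G;\Q)^{\Gamma}$ already surjects onto $A(G,<_{G}) \otimes \Q$ under $p$; that is, every class in $A(G,<_{G}) \otimes \Q$ must admit a $\Gamma$-fixed lift. My plan for establishing this is to exploit the rigidity of order-preserving automorphisms rather than any general splitting principle. Such automorphisms preserve the canonical filtration of $A(G,<_{G})$ by convex subgroups (these are order-defined, hence $\Gamma$-stable), so the induced action of $\Gamma$ on $A(G,<_{G}) \otimes \Q$ is triangular with respect to this filtration and acts on each archimedean graded piece by order-automorphisms of a subgroup of $\mathbb{R}$, which are extremely constrained. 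I would then construct the fixed lifts one graded layer at a time, using characteristic-zero coefficients together with this triangular structure to kill the obstruction cocycle $\phi \mapsto \phi_{*}\widetilde{a} - \widetilde{a} \in V_{G}$. Verifying that this obstruction genuinely vanishes — not merely that some abstract complement exists — is the step I expect to demand the most care, and it is precisely here that the special geometry of bi-orderings, as opposed to arbitrary group extensions, has to enter.
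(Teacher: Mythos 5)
Your treatment of (1) coincides with the paper's, which simply records it as a direct consequence of Lemma \ref{lem:conv}. Your treatment of (3) is a reconstruction of \cite[Lemma 2.2]{cr}, which the paper cites as a black box: there is a maximal proper convex subgroup $C$, the quotient $G\slash C$ is abelian (H\"older), hence $[G,G] \subset C_{G} \subset C \subsetneq G$. Your caveat that the union of all proper convex subgroups need not be proper without a finite generation hypothesis is a fair one which the paper does not address; it is harmless in the intended $3$-manifold application.

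For (2), your candidate $V_{G}$ is exactly the paper's: the kernel of $p \otimes id_{\Q} : H_{1}(G;\Q) \rightarrow A(G,<_{G}) \otimes \Q$, invariant because $C_{G}$ is characteristic under order-preserving automorphisms by Lemma \ref{lem:conv}. The paper's proof stops there and simply asserts that the resulting direct sum decomposition is invariant; it makes no attempt to produce an invariant complement to $V_{G}$. So the equivariant-splitting program you single out as the main obstacle --- and leave unfinished, with the vanishing of the obstruction cocycle unverified --- is precisely the part the paper does not do either, and it is not needed: everywhere the lemma is used (the divisibility relation between $\chi_{A}(t)$ and $\Delta_{M}^{\phi}(t)$ in Section 3.3, and the commutative square in the proof of Theorem \ref{thm:main-alex}), only the invariance of the subspace $V_{G}$ and the identification $H_{1}(G;\Q)\slash V_{G} \cong A(G,<_{G}) \otimes \Q$ are used, since the characteristic polynomial of the induced map on the quotient divides that of the map on $H_{1}(G;\Q)$ whenever the kernel is invariant. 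I would also flag that your reduction of the splitting problem to the surjectivity of $H_{1}(G;\Q)^{\Gamma} \rightarrow A(G,<_{G}) \otimes \Q$ is only valid when $\Gamma$ acts trivially on the quotient, which fails here (the monodromy acts nontrivially on $A(F,<_{F})$ in general). The right move is to prove (2) in the weaker, sufficient form (an invariant subspace with the prescribed quotient) rather than to pursue the invariant complement, which is a genuinely stronger claim that neither you nor the paper establishes.
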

\begin{proof}
Assertion (1) is a direct consequence of Lemma \ref{lem:conv}.
To show assertion (2), let $V_{G}$ be the kernel of the map $p \otimes id_{\Q} : H_{1}(G;\Z) \otimes \Q \rightarrow A(G,<_{G}) \otimes \Q $ where $p: H_{1}(G;\Z)= G\slash [G,G] \rightarrow A = G \slash C_{G}$ is the natural projection. Since $C_{G}$ is preserved by any order-preserving automorphisms, so is $A(G,<_{G}) \otimes \Q$. Hence the direct sum decomposition $H_{1}(G;\Q) = (A(G,<_{G}) \otimes \Q) \oplus V_{G}$ is invariant under order-preserving automorphisms.

Finally, assertion (3) follows from \cite[Lemma 2.2]{cr}, which asserts that there exists the maximal proper convex subgroup $C$ of $(G,<_{G})$, and that $G\slash C$ is abelian. Since $C \supset C_{G}$, this implies $A(G,<_{G})$ is non-trivial.
\end{proof}

We proceed to study more properties of the maximal ordered abelian quotient.
Recall that the commutator $[a,b] = a^{-1}b^{-1}ab$ satisfies commutator identities
\[ [a,bc] = [a,c][a,b][[a,b],c], \; [ab,c] = [a,c][[a,c],b][b,c] .\]

These identities give the following inequalities. 

\begin{lem}
\label{lem:comm}
Let $(G,<_{G})$ be a bi-ordered group and $a, b \in G$.
\begin{enumerate}
\item $[a,b] <_{G} b$ if $b >_{G} 1 $ and $[a,b] >_{G} b$ if $b<_{G}1$.
\item $[a,b] >_{G} a^{-1}$ if $a>_{G}1$ and $[a,b] <_{G} a^{-1}$ if $a<_{G}1$.
\item If $[a,b]>_{G} 1$, then $[a^{n},b^{m}] >_{G} [a,b]$ holds for $m,n>1$.
\end{enumerate}
\end{lem}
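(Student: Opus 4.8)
Parts (1) and (2) do not require the commutator identities at all; they follow from the two defining features of a bi-ordering, namely invariance under left and right multiplication together with its consequence that conjugation preserves the sign of an element ($g >_{G} 1$ iff $h^{-1}gh >_{G} 1$). For (1), cancelling $b$ on the right reduces $[a,b] <_{G} b$ to $a^{-1}b^{-1}a <_{G} 1$, and then conjugation-invariance gives
\[ [a,b] <_{G} b \iff a^{-1}b^{-1}a <_{G} 1 \iff b^{-1} <_{G} 1 \iff b >_{G} 1; \]
since this is a chain of equivalences, both halves of (1) drop out at once. For (2), left multiplication by $a$ gives
\[ [a,b] >_{G} a^{-1} \iff b^{-1}ab >_{G} 1 \iff a >_{G} 1, \]
again by conjugation-invariance, which settles both halves of (2).

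The substance of the lemma is (3). My plan is to prove it in two stages, in each of which the relevant commutator identity is applied once to peel off a single variable, and part (2) is used to control the resulting higher-commutator correction term precisely. First, fixing $m>1$, the identity $[a,bc]=[a,c][a,b][[a,b],c]$ with $c=b^{m-1}$ gives
\[ [a,b^{m}] = [a,b^{m-1}]\,[a,b]\,[[a,b],b^{m-1}]. \]
Because $[a,b] >_{G} 1$, part (2) applied to the pair $([a,b],\,b^{m-1})$ yields $[[a,b],b^{m-1}] >_{G} [a,b]^{-1}$; multiplying this inequality on the left by the fixed element $[a,b^{m-1}][a,b]$ shows $[a,b^{m}] >_{G} [a,b^{m-1}]$. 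By transitivity this produces the chain $[a,b] <_{G} [a,b^{2}] <_{G} \cdots <_{G} [a,b^{m}]$, so $[a,b^{m}] >_{G} [a,b] >_{G} 1$ for every $m>1$.

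In the second stage I fix $m$, set $d := [a,b^{m}] >_{G} 1$, and run the same argument on the other variable using $[ab,c]=[a,c][[a,c],b][b,c]$ with the splitting $a^{n} = a\cdot a^{n-1}$:
\[ [a^{n},b^{m}] = [a,b^{m}]\,[[a,b^{m}],a^{n-1}]\,[a^{n-1},b^{m}]. \]
Part (2), now applied to $(d,\,a^{n-1})$, gives $[[a,b^{m}],a^{n-1}] >_{G} d^{-1}$; multiplying on the left by $d$ and on the right by $[a^{n-1},b^{m}]$ yields $[a^{n},b^{m}] >_{G} [a^{n-1},b^{m}]$, and transitivity gives $[a^{n},b^{m}] >_{G} [a,b^{m}]$ for $n>1$. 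Combining the two chains, $[a^{n},b^{m}] >_{G} [a,b^{m}] >_{G} [a,b]$ for all $m,n>1$, as required.

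The step I expect to be the crux is the control of the auxiliary factors $[[a,b],b^{m-1}]$ and $[[a,b^{m}],a^{n-1}]$: a naive expansion of $[a^{n},b^{m}]$ generates many higher commutators of indeterminate sign, and the whole argument would stall without a sharp estimate for them. The point is that part (2) bounds each correction term below by \emph{exactly} the inverse of the commutator being expanded, so that after multiplication precisely one surplus factor survives and the order strictly increases. Because each step invokes only the positivity of the single commutator being split, no separate positivity bookkeeping for the intermediate terms $[a^{k},b^{m}]$ is needed.
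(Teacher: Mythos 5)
Your proof is correct and follows essentially the same route as the paper: the same two commutator identities peel off one power at a time, and part (2) is used in exactly the same way to bound the higher-commutator correction terms by the inverse of the commutator being split. The only (harmless) difference is that you establish the monotone chains $[a,b^{m}] >_{G} [a,b^{m-1}]$ and $[a^{n},b^{m}] >_{G} [a^{n-1},b^{m}]$ rather than invoking the inductive hypothesis inside the inequality, and you write out the routine parts (1) and (2) explicitly.
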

\begin{proof}
A proof of (1), (2) is routine. We show (3).
First we show $[a,b^{m}] >_{G} [a,b]$ by induction on $m$. 
By (2), we have $[[a,b],b^{m-1}] >_{G} [a,b]^{-1}$ and
from inductive hypothesis we have $[a, b^{m-1}] >_{G} [a,b]$. Thus, by commutator identity we obtain an inequality
\[ [a,b^{m}] = [a, b^{m-1}][a,b][[a,b],b^{m-1}] >_{G} [a,b][a,b][a,b]^{-1} = [a,b].\]
Similarly, by induction on $n$ we get an inequality $ [a^{n},b] >_{G} [a,b]$.
These two inequalities give the desired inequality.
\end{proof}

Now we are ready to prove Theorem \ref{thm:main}.

\begin{proof}[Proof of Theorem \ref{thm:main}]
Since $[G:H]$ is finite, there is a finite integer $N>0$ such that $g^{N} \in H$ holds for all $g \in G$.
Thus by Lemma \ref{lem:comm} (3), for any commutator $[g,g']$ of elements in $G$, we can find a commutator $[h,h']$ of elements in $H$ which satisfies the inequality
\[ [h,h']^{-1} <_{G} [g,g'] <_{G} [h,h'].\]
This implies that $C_{H} = C_{G} \cap H$. Therefore we get a sequence of isomorphisms of $\Q$-vector spaces
\[ (H\slash C_{H}) \otimes \Q \cong (H \slash C_{G} \cap H) \otimes \Q \cong (HC_{G} \slash C_{G}) \otimes \Q \cong (G\slash C_{G}) \otimes \Q .\]
\end{proof}

\section{Why the twisted Alexander polynomials are useless ?}

In this section we explain why the twisted Alexander polynomials cannot make Clay-Rolfsen's obstruction powerful.

\subsection{Twisted Alexander polynomial}

We review the definition and basic properties of the twisted Alexander polynomials. For details, see \cite{fv3}.

Let $\phi: \pi_{1}(M) \rightarrow \Z  = \langle t \rangle $ be a non-trivial homomorphism and $V_{\alpha}$ be a finite dimensional left $\Q \pi_{1}(M)$-module defined by the presentation $\alpha: \pi_{1}(M) \rightarrow \GL(V)$.
We say $\alpha$ is a {\it finite representation} if the image of $\alpha$ is finite. The classical Alexander polynomial is obtained as the twisted Alexander polynomial corresponding to the trivial representation $\varepsilon: \pi_{1}(M) \rightarrow \GL(\Q)=\Q$.

Let $\alpha \otimes \phi: \pi_{1}(M) \rightarrow \GL(V\otimes_{\Q} \Q[t,t^{-1}])$ be the product representation given by 
$[ \alpha \otimes \phi ](g): v \otimes t^{i} \mapsto [\alpha(g)](v)\otimes t^{i+\phi(g)}$.
Then $V_{\alpha \otimes \phi}$ is a left $\Q \pi_{1}(M)$-module and the action of $\Q\pi_{1}(M)$ commutes with the right action of $\Q[t,t^{-1}]$.

The {\it $i$-th twisted Alexander module} is the $\Q[t,t^{-1}]$-module defined by the $i$-th twisted coefficient homology group
\[ H_{i}(M;V_{\alpha \otimes \phi}) = H_{i} (C_{*}(\widetilde{M}) \otimes_{\Q \pi_{1}(X)} V_{\alpha \otimes \phi}) \]
where $C_{*}(\widetilde{M})$ is the singular chain complex of the universal cover $\widetilde{M}$ of $M$, viewed as a right $\Q \pi_{1}(M)$-module.

Since $H_{i}(M;V_{\alpha \otimes \phi})$ is a finitely generated $\Q[t,t^{-1}]$-module, there exist elements $p_{i}(t) \in \Q[t,t^{-1}]$ and an isomorphism as $\Q[t,t^{-1}]$-module
\[ H_{i}(M;V_{\alpha \otimes \phi}) \cong \Q[t,t^{-1}]^{k} \oplus \bigoplus_{i=1}^{m} \Q[t,t^{-1}]\slash ( p_{i}(t) ). \]

The elements $p_{i}(t)$ are well-defined up to multiplication by a unit of $\Q[t,t^{-1}]$ if we add the condition that $p_{i}(t)$ divides $p_{i+1}(t)$ for each $i$.
The {\it $i$-th twisted Alexander polynomial} $\Delta_{M,i}^{\alpha \otimes \phi}$ is a Laurant polynomial defined by
\[ \Delta_{M,i}^{\alpha \otimes \phi}(t) = \left\{ 
\begin{array}{ll} 
\prod_{i=1}^{m} p_{i}(t) & k=0 \\
0 & k\neq 0.
\end{array}
\right.
\]

The twisted Alexander polynomials are well-defined up to multiplication by a unit of $\Q[t,t^{-1}]$. In particular, the (non-zero) roots of the twisted Alexander polynomial are well-defined. 
For 3-manifolds, we only consider the $1$st twisted Alexander polynomial and simply denote the $1$st twisted Alexander polynomial by $\Delta_{M}^{\alpha \otimes \phi}$.

The twisted Alexander polynomials has the following properties. 
\begin{lem}
\label{lem:changeabel}
Let $d \cdot \phi: \pi_{1}(M) \rightarrow \Z = \langle t \rangle$ be the homomorphism defined by $d\cdot\phi(g) = t^{\phi(g) \cdot d}$. Then
\[ \Delta_{M}^{\alpha \otimes d\cdot \phi}(t) = \Delta_{M}^{\alpha \otimes \phi}(t^{d})\]
holds.
\end{lem}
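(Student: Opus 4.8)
The plan is to realize the entire twisted chain complex for $d\cdot\phi$ as a base change of the one for $\phi$ along the substitution $t\mapsto t^{d}$, and then track how the structure-theorem decomposition transforms. Write $R=\Q[t,t^{-1}]$ and let $s_{d}:R\to R$ be the $\Q$-algebra endomorphism with $s_{d}(t)=t^{d}$; its image is the subring $\Q[t^{d},t^{-d}]$, over which $R$ is free of rank $d$ (basis $1,t,\dots,t^{d-1}$), so $s_{d}$ is a flat ring map. The key pointwise observation is that, after choosing a basis of $V$, the representation $\alpha\otimes\phi$ sends $g$ to the matrix $\alpha(g)\,t^{\phi(g)}$ over $R$, while $\alpha\otimes d\cdot\phi$ sends $g$ to $\alpha(g)\,t^{d\phi(g)}$. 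Since the entries of $\alpha(g)$ are rational and hence fixed by $s_{d}$, we have $(\alpha\otimes d\cdot\phi)(g)=s_{d}\big((\alpha\otimes\phi)(g)\big)$, the entrywise application of $s_{d}$.

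Next I would lift this to the chain level. Giving $M$ a finite CW structure, $C_{*}(\tM)$ is a bounded complex of finitely generated free right $\Q\pi_{1}(M)$-modules whose boundary maps are matrices over $\Z\pi_{1}(M)$. Applying the pointwise identity to these matrices shows that the differentials of $C_{*}(\tM)\otimes_{\Q\pi_{1}(M)}V_{\alpha\otimes d\cdot\phi}$ are obtained from those of $C_{*}(\tM)\otimes_{\Q\pi_{1}(M)}V_{\alpha\otimes\phi}$ by applying $s_{d}$ entrywise, so that as complexes of free $R$-modules
\[ C_{*}(\tM)\otimes_{\Q\pi_{1}(M)}V_{\alpha\otimes d\cdot\phi}\;\cong\;\big(C_{*}(\tM)\otimes_{\Q\pi_{1}(M)}V_{\alpha\otimes\phi}\big)\otimes_{R,\,s_{d}}R. \]
Because $s_{d}$ is flat, base change commutes with homology, yielding $H_{1}(M;V_{\alpha\otimes d\cdot\phi})\cong H_{1}(M;V_{\alpha\otimes\phi})\otimes_{R,s_{d}}R$.

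I would then feed in the structure theorem. Applying $-\otimes_{R,s_{d}}R$ to $R^{k}\oplus\bigoplus_{i}R/(p_{i}(t))$ sends the free part $R^{k}$ to $R^{k}$ and each torsion summand $R/(p_{i}(t))$ to $R/(p_{i}(t^{d}))$; since $p_{1}\mid p_{2}\mid\cdots$ forces $p_{1}(t^{d})\mid p_{2}(t^{d})\mid\cdots$, these are precisely the invariant factors of the base-changed module, and the free rank $k$ is unchanged. Hence $\Delta_{M}^{\alpha\otimes d\cdot\phi}$ vanishes exactly when $\Delta_{M}^{\alpha\otimes\phi}$ does, and when $k=0$ it equals $\prod_{i}p_{i}(t^{d})=\Delta_{M}^{\alpha\otimes\phi}(t^{d})$, as claimed.

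I expect the main obstacle to be the chain-level identification together with the flat-base-change step: one must check carefully that the twisted complex for $d\cdot\phi$ genuinely is the $s_{d}$-base change of the one for $\phi$ (this rests on $s_{d}$ fixing the rational entries of $\alpha$), and that $s_{d}$, although not surjective, is flat so that homology commutes with it. A more elementary route avoids flatness by computing $\Delta_{M}^{\alpha\otimes\phi}$ as the $\gcd$ of maximal minors of a Fox-calculus presentation matrix and verifying that $s_{d}$ commutes with this $\gcd$; the only delicate point there is that if $f_{i}=g\,h_{i}$ with the $h_{i}$ coprime then the $h_{i}(t^{d})$ remain coprime, since a common root of the $h_{i}(t^{d})$ would, after raising to the $d$-th power, produce a common root of the $h_{i}$, whence $\gcd_{i}s_{d}(f_{i})=s_{d}(\gcd_{i}f_{i})$.
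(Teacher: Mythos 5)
Your argument is correct. Note, however, that the paper itself gives no proof of this lemma at all: it is stated as one of the standard properties of twisted Alexander polynomials, with the reader referred to the survey of Friedl--Vidussi for details. So there is no in-paper argument to compare against; what you have written is a complete, self-contained justification. The two essential points are both handled properly: (i) the identification $V_{\alpha\otimes d\cdot\phi}\cong V_{\alpha\otimes\phi}\otimes_{R,s_{d}}R$ as $(\Q\pi_{1}(M),R)$-bimodules, which (after replacing the singular complex by a finite free cellular one, harmless up to chain homotopy equivalence) exhibits the twisted complex for $d\cdot\phi$ as the base change of the one for $\phi$; and (ii) the flatness of $s_{d}$, since $R$ is free of rank $|d|$ over $\Q[t^{d},t^{-d}]$, which lets homology commute with the base change and sends $R^{k}\oplus\bigoplus_{i}R/(p_{i}(t))$ to $R^{k}\oplus\bigoplus_{i}R/(p_{i}(t^{d}))$. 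One small simplification: you do not need the base-changed cyclic factors to be the invariant factors, since $\prod_{i}p_{i}$ is the generator of the order (zeroth Fitting) ideal and is independent of the chosen cyclic decomposition. Your alternative gcd-of-minors route is also sound, with the one caveat you already flag, namely that coprimality in $\Q[t,t^{-1}]$ is detected by common roots in $\overline{\Q}\setminus\{0\}$, and a nonzero common root of the $h_{i}(t^{d})$ would yield a nonzero common root of the $h_{i}$.
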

\begin{lem}
\label{lem:sumrep}
Let $\alpha: \pi_{1}(M) \rightarrow \GL(V)$, $\beta:  \pi_{1}(M) \rightarrow \GL(W)$ be finite dimensional representations, and $\alpha \oplus \beta:  \pi_{1}(M) \rightarrow \GL(V\oplus W)$ be its direct sum. Then 
\[  \Delta_{M}^{(\alpha \oplus \beta) \otimes \phi}(t) = \Delta_{M}^{\alpha \otimes \phi}(t) \cdot \Delta_{M}^{\beta \otimes \phi}(t)\]
holds.
\end{lem}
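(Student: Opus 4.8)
The plan is to reduce this multiplicativity statement to the multiplicativity of the order of a finitely generated torsion module over the principal ideal domain $\Q[t,t^{-1}]$. The starting observation is that the product representation of a direct sum splits as a direct sum of product representations. Indeed, since $(\alpha \oplus \beta)(g)$ acts as $\alpha(g)$ on $V$ and as $\beta(g)$ on $W$, the twisting by $\phi$ preserves this block structure; using the canonical identification $(V\oplus W)\otimes_{\Q}\Q[t,t^{-1}]\cong (V\otimes_{\Q}\Q[t,t^{-1}])\oplus(W\otimes_{\Q}\Q[t,t^{-1}])$, I would check that as a left $\Q\pi_{1}(M)$-module whose action commutes with the right $\Q[t,t^{-1}]$-action we have
\[ V_{(\alpha \oplus \beta) \otimes \phi} \cong V_{\alpha \otimes \phi} \oplus V_{\beta \otimes \phi}. \]

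First I would feed this splitting into the twisted chain complex. Since tensoring $C_{*}(\tM)$ over $\Q\pi_{1}(M)$ commutes with finite direct sums of coefficient modules, the chain complex $C_{*}(\tM)\otimes_{\Q\pi_{1}(M)}V_{(\alpha\oplus\beta)\otimes\phi}$ decomposes as a direct sum of complexes of $\Q[t,t^{-1}]$-modules, the $\Q[t,t^{-1}]$-action commuting with the boundary maps because it commutes with the $\Q\pi_{1}(M)$-action. As homology also commutes with direct sums, this yields an isomorphism of $\Q[t,t^{-1}]$-modules
\[ H_{i}(M; V_{(\alpha \oplus \beta) \otimes \phi}) \cong H_{i}(M; V_{\alpha \otimes \phi}) \oplus H_{i}(M; V_{\beta \otimes \phi}) \]
for every $i$, and in particular for $i=1$.

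Next I would invoke the structure theorem over the PID $\Q[t,t^{-1}]$. If the free ranks of both $H_{1}(M;V_{\alpha\otimes\phi})$ and $H_{1}(M;V_{\beta\otimes\phi})$ vanish, then their direct sum is again torsion, and the order of a direct sum of finitely generated torsion modules over a PID is the product of the orders; since the twisted Alexander polynomial is exactly this order, the identity $\Delta_{M}^{(\alpha\oplus\beta)\otimes\phi}(t)=\Delta_{M}^{\alpha\otimes\phi}(t)\cdot\Delta_{M}^{\beta\otimes\phi}(t)$ follows. If instead at least one summand carries a nonzero free part, then so does the direct sum, so the left-hand side is $0$ by definition, while the right-hand product contains a factor $0$; thus both sides vanish and the identity still holds.

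The step demanding the most care is bookkeeping rather than conceptual: one must confirm that the splitting of $V_{(\alpha\oplus\beta)\otimes\phi}$ is compatible with both the left $\Q\pi_{1}(M)$-structure and the right $\Q[t,t^{-1}]$-structure, so that it truly descends to a $\Q[t,t^{-1}]$-module isomorphism on homology, and one must align the case distinction on the free rank with the piecewise definition of $\Delta_{M}^{\bullet}$. Beyond this routine verification I expect no genuine obstacle, since the entire argument rests on the functoriality of twisted homology under direct sums together with the elementary multiplicativity of orders over $\Q[t,t^{-1}]$.
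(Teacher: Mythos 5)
The paper states this lemma without proof, treating it as a standard property of twisted Alexander polynomials (citing the Friedl--Vidussi survey), so there is no in-paper argument to compare against. Your proof is correct and complete, and is the standard one: the bimodule splitting $V_{(\alpha\oplus\beta)\otimes\phi}\cong V_{\alpha\otimes\phi}\oplus V_{\beta\otimes\phi}$ passes through the twisted chain complex and homology, and the identity then follows from the multiplicativity of the order of finitely generated torsion modules over the PID $\Q[t,t^{-1}]$, with your case distinction on the free rank correctly matching the piecewise definition of $\Delta_{M}^{\bullet}$.
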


Now we prove (a special case of) Shapiro's lemma for twisted Alexander polynomials.
Let $G$ be a finite group and $p:\tM_{G} \rightarrow M$ be a $G$-covering of $M$ which corresponds to a certain surjective homomorphism $f: \pi_{1}(M) \rightarrow G$.
Let $\phi: \pi_{1}(M) \rightarrow \Z$ be a non-trivial homomorphism and $p^{*}\phi: \pi_{1}(\tM_{G}) \rightarrow \Z$ be its pull-back.
Let $\alpha = \iota \circ f: \pi_{1}(M) \rightarrow \GL( \Q G)$ be a finite representation where $\iota : G \rightarrow \GL(\Q G)$ be the regular representation of $G$. We call such a representation a {\it regular finite representation}.

\begin{lem}[Shapiro's Lemma \cite{fv3}]
\label{lem:shapiro}
Let $\alpha: \pi_{1}(M) \rightarrow \GL( \Q G)$ be a regular finite representation. Then there is an equality
\[ \Delta_{M}^{\alpha \otimes \phi}(t) = \Delta_{\tM_{G}}^{p^{*}\phi}(t) \]
where $\Delta^{p^{*}\phi}_{\tM_{G}}$ is the classical Alexander polynomial of $\tM_{G}$ with respect to $p^{*}\phi$.
\end{lem}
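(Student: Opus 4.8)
The plan is to realize the stated equality as an instance of Shapiro's lemma at the level of twisted Alexander modules, and then to read off the polynomials as orders of torsion. Write $\pi = \pi_{1}(M)$ and $K = \ker f$, so that $K = \pi_{1}(\tM_{G})$ and the universal cover of $\tM_{G}$ is again $\widetilde{M}$. On the left we have the $\pi$-$\Q[t,t^{-1}]$-bimodule $V_{\alpha \otimes \phi} = \Q G \otimes_{\Q} \Q[t,t^{-1}]$, on which $g \in \pi$ acts by $g \cdot (\bar{x} \otimes t^{i}) = f(g)\bar{x} \otimes t^{i+\phi(g)}$; on the right we have $\Q[t,t^{-1}]$ regarded as a left $\Q K$-module via $p^{*}\phi = \phi|_{K}$, which I will denote $\Q[t,t^{-1}]_{\phi|_{K}}$. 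The whole argument reduces to comparing these two coefficient systems.

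First I would establish the bimodule isomorphism
\[ V_{\alpha \otimes \phi} \;\cong\; \Q\pi \otimes_{\Q K} \Q[t,t^{-1}]_{\phi|_{K}}, \]
where the right-hand side is the module induced from the $\phi|_{K}$-twisted coefficients on $K$. Fixing coset representatives $g_{1},\dots,g_{|G|}$ for $\pi/K$ (so that $f(g_{1}),\dots,f(g_{|G|})$ exhaust $G$, as $f$ is surjective), I would define the comparison map by $g_{j} \otimes q \mapsto f(g_{j}) \otimes t^{\phi(g_{j})}q$ for $q \in \Q[t,t^{-1}]$. Right $\Q[t,t^{-1}]$-linearity is immediate; the substantive point is $\pi$-equivariance. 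Writing $g g_{j} = g_{\sigma(j)}k$ with $k \in K$, the relation $\phi(g) + \phi(g_{j}) = \phi(g_{\sigma(j)}) + \phi(k)$ shows that both sides transform by the factor $t^{\phi(g)+\phi(g_{j})}$, the extra $t^{\phi(g_{j})}$ being exactly what absorbs the discrepancy between the shift coming from $\phi$ on all of $\pi$ and the shift coming from $\phi|_{K}$ after passing to coset representatives.

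Granting this, the rest is formal. Since $C_{*}(\widetilde{M})$ is a complex of free right $\Q\pi$-modules (hence also of free right $\Q K$-modules by restriction), associativity of the tensor product gives
\[ C_{*}(\widetilde{M}) \otimes_{\Q\pi} \bigl( \Q\pi \otimes_{\Q K} \Q[t,t^{-1}]_{\phi|_{K}} \bigr) \;\cong\; C_{*}(\widetilde{M}) \otimes_{\Q K} \Q[t,t^{-1}]_{\phi|_{K}}, \]
and the right-hand complex computes $H_{*}(\tM_{G}; \Q[t,t^{-1}]_{p^{*}\phi})$ because $\widetilde{M}$ is also the universal cover of $\tM_{G}$. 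Combined with the bimodule isomorphism above, this produces an isomorphism of $\Q[t,t^{-1}]$-modules $H_{i}(M; V_{\alpha \otimes \phi}) \cong H_{i}(\tM_{G}; \Q[t,t^{-1}]_{p^{*}\phi})$ for every $i$. Because the twisted Alexander polynomial is by definition the order of the torsion submodule of the first such module (and is $0$ precisely when the free rank is positive), isomorphic $\Q[t,t^{-1}]$-modules yield the same invariant up to a unit, giving $\Delta_{M}^{\alpha \otimes \phi}(t) = \Delta_{\tM_{G}}^{p^{*}\phi}(t)$.

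I expect the main obstacle to be the equivariance check in the middle step: the naive identification $g_{j} \otimes q \mapsto f(g_{j}) \otimes q$ fails to be $\pi$-equivariant, and one genuinely needs the twist by $t^{\phi(g_{j})}$ on each coset representative to reconcile the two $t$-gradings. Once the correct twisted isomorphism is in hand, the change-of-rings step and the passage from modules to polynomials are routine.
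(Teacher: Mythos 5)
Your proof is correct and follows essentially the same route as the paper's: identify the coefficient system $V_{\alpha\otimes\phi}$ with the module induced from $\Q[t,t^{-1}]_{p^{*}\phi}$ over $\Q\pi_{1}(\tM_{G})$, apply associativity of the tensor product to the chain complex of the common universal cover, and read off the polynomials from the isomorphic $\Q[t,t^{-1}]$-modules. The only difference is that you spell out the bimodule isomorphism (including the necessary twist by $t^{\phi(g_{j})}$ on coset representatives), a verification the paper leaves implicit.
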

\begin{proof}
Let $\widetilde{M}$ be the common universal covering of $M$ and $\tM_{G}$.
We have the isomorphisms of the chain complexes
\begin{eqnarray*}
 C_{*}(\widetilde{M}) \otimes_{ \Q \pi_{1}(\tM_{G})} V_{\alpha \otimes p^{*}\phi} & = &  C_{*}(\widetilde{M}) \otimes_{ \Q \pi_{1}(M)} (\Q \pi_{1}(M) \otimes_{ \Q \pi_{1}(\tM_{G})} V _{p_{*}\alpha \otimes \phi}) \\
 & = & C_{*}(\widetilde{M}) \otimes_{ \Q \pi_{1}(M)} V_{ p_{*}\alpha \otimes \phi}. 
 \end{eqnarray*}
 Hence we obtain isomorphisms of the twisted Alexander modules and the desired equality of the twisted Alexander polynomials.
 \end{proof}
 
\subsection{Strategy to get stronger condition from classical Alexander polynomials}

Assume that we have already obtained some arguments which uses the classical Alexander polynomial. That is, we have a statement of the form {\it ``If a 3-manifold $M$ has a property $X$, then its Alexander polynomial has a property $Y$."}
Then we extend the argument for the twisted Alexander polynomials for finite representation according to the following strategy, as in \cite{fv1}. 

\begin{enumerate}
\item First consider the twisted Alexander polynomials for regular finite representations. By Shapiro's lemma, such twisted Alexander polynomials are the classical Alexander polynomials of finite covering.

\item Verify that the property $X$ of 3-manifolds we are considering is preserved by taking finite coverings. 
(For example, the property that the fundamental group is bi-orderable is preserved by taking finite coverings.)

\item By the classical Alexander polynomial argument, a twisted Alexander polynomial for a regular finite representation has the property $Y$ .

\item Consider the irreducible decomposition of the regular representation and get the corresponding factorization of the twisted Alexander polynomials (See Remark \ref{rem:rep} below).

\item Study how the property $Y$ behaves under the factorization and obtain the property of twisted Alexander polynomials. For example, if each factor of the polynomial also has the property $Y$, then we conclude that every twisted Alexander polynomial for finite representation has the property $Y$.
\end{enumerate}

\begin{rem}
\label{rem:rep}
Since we are studying the representation over $\Q$, by Maschke's theorem representations of a finite group $G$ over $\Q$ are completely reducible, and each irreducible representation appears as an irreducible summands of the regular representation. Thus by Lemma \ref{lem:sumrep}, we can obtain all twisted Alexander polynomials for finite representations from the twisted Alexander polynomials for regular finite representations. 
\end{rem}

\subsection{Review of Clay-Rolfsen's argument}

Now we briefly review Clay-Rolfsen's argument. Throughout the rest of this paper, we always assume that 3-manifold $M$ is fibered, and a homomorphism $\phi: \pi_{1}(M) \rightarrow \Z$ is derived from a fibration map $M \rightarrow S^{1}$. We also put $\theta: \Sigma \rightarrow \Sigma$ be the monodromy map and  $F=\pi_{1}(\Sigma)$.

The Clay-Rolfsen's argument is based on the following well-known fact of an HNN-extension of bi-orderable groups.

\begin{lem}
\label{lem:HNN}
Let $H$ be a bi-orderable group and $G$ be an HNN-extension of $H$ by the automorphism $\phi$. Then $G$ is bi-orderable if and only if there exists a bi-ordering of $H$ which is preserved by $\phi$. 
\end{lem}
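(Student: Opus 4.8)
The plan is to recognise that an HNN-extension of $H$ by an automorphism $\phi$ is nothing but the semidirect product $G = H \rtimes_{\phi} \Z$, in which the stable letter $t$ conjugates $H$ by $\phi$, so that $tht^{-1} = \phi(h)$ for all $h \in H$, and every element of $G$ has a unique normal form $ht^{n}$ with $h \in H$ and $n \in \Z$. With this description in hand, both implications reduce to the single fact that conjugation in a bi-ordered group is order-preserving, applied to the particular element $t$.

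For the ``only if'' direction, suppose $G$ carries a bi-ordering $<_{G}$ and let $<_{H}$ be its restriction to $H$, which is again a bi-ordering. Because $<_{G}$ is invariant under conjugation, from $h <_{G} h'$ I obtain $tht^{-1} <_{G} th't^{-1}$, that is $\phi(h) <_{G} \phi(h')$; since $\phi(h),\phi(h') \in H$, restricting to $H$ shows that $\phi$ preserves $<_{H}$. Thus $<_{H}$ is the required $\phi$-invariant bi-ordering, and this direction needs no computation.

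For the ``if'' direction, suppose $<_{H}$ is a bi-ordering of $H$ with $h <_{H} h'$ implying $\phi(h) <_{H} \phi(h')$. I would produce a bi-ordering of $G$ lexicographically relative to the projection $\pi: G \to \Z$, $\pi(ht^{n}) = n$, by taking the positive cone $P = \pi^{-1}(\Z_{>0}) \cup P_{H}$, where $P_{H}$ is the positive cone of $(H,<_{H})$. That $G = P \sqcup \{1\} \sqcup P^{-1}$ and that $P$ is closed under multiplication follow routinely from $\pi$ being a homomorphism together with the bi-invariance of $<_{H}$, splitting into the cases according to whether the $\pi$-values vanish. The one place where the hypothesis is genuinely needed, and the main (if mild) obstacle, is checking that $P$ is closed under conjugation: the piece $\pi^{-1}(\Z_{>0})$ is automatically conjugation-invariant since $\pi$ is a homomorphism, so the point is to show that conjugating $h \in P_{H}$ keeps it in $P_{H}$. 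Conjugation by elements of $H$ preserves $P_{H}$ because $<_{H}$ is a bi-ordering, while conjugation by $t$ sends $h$ to $\phi(h) \in P_{H}$ precisely because $\phi$ preserves $<_{H}$; a general conjugation is a composite of these two, so $P$ is normal and hence defines a bi-ordering of $G$, completing the equivalence.
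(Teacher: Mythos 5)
Your proof is correct. The paper itself offers no proof of this lemma, citing it only as a well-known fact about HNN-extensions by an automorphism (i.e.\ semidirect products $H \rtimes_{\phi} \Z$), and your argument is exactly the standard one: the ``only if'' direction by restricting the ordering and using invariance under conjugation by the stable letter, and the ``if'' direction via the lexicographic positive cone $P = \pi^{-1}(\Z_{>0}) \cup P_{H}$. The only point worth making explicit is that conjugation by negative powers of $t$ acts on $H$ by $\phi^{-1}$, so you are implicitly using that the inverse of an order-preserving automorphism is again order-preserving --- true, but worth a word.
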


Assume that $\pi_{1}(M)$ is bi-orderable.
Then by Lemma \ref{lem:HNN}, there exists a bi-ordering $<_{F}$ of $F$ which is invariant under the monodromy $\theta_{*}: F \rightarrow F$.
So $\theta$ induces an order-preserving map $\theta_{*}^{A} : A(F,<_{F}) \rightarrow A(F,<_{F})$ by Lemma \ref{lem:conv}.
Let $\chi_{A}(t)$ be the characteristic polynomial of $\theta_{*}^{A}\otimes id_{\Q}$. 
The key lemma shown in \cite{cr} is the following.

\begin{lem}
\label{lem:key}
Let $(A,<_{A})$ be a bi-ordered abelian group of finite rank and $\theta : A \rightarrow A$ be an order-preserving automorphism. 
Then the $\Q$-linear map $\theta \otimes id_{\Q} : A \otimes \Q \rightarrow A \otimes \Q$ has at least one positive real eigenvalue.
\end{lem}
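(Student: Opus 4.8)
The plan is to reinterpret the statement geometrically. Since an ordered abelian group is torsion-free, the ordering $<_{A}$ extends canonically to a total ordering of the finite-dimensional $\Q$-vector space $V = A \otimes \Q$ (declaring $x/n > 0$ whenever $x \in A$ is positive and $n > 0$), and $\theta \otimes id_{\Q}$ becomes an order-preserving linear automorphism $T$ of $V$. Thus I must show that an order-preserving automorphism of a finite-dimensional ordered $\Q$-vector space has a positive real eigenvalue. The first step is to reduce to the Archimedean case. The convex subgroups of a totally ordered abelian group are totally ordered by inclusion, and in our finite-rank setting each of them is automatically a $\Q$-subspace, so they form a finite chain. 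I would let $C$ be the minimal nonzero convex subspace; by minimality $C$ has no proper nonzero convex subspace and is therefore Archimedean. Because $T$ is an order-automorphism it carries convex subspaces to convex subspaces preserving inclusion, so it must fix the canonical subspace $C$, giving an order-preserving automorphism $T|_{C}$ of $C$.

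Next I would treat the Archimedean piece using H\"older's theorem. It provides an order-preserving group homomorphism $\psi : C \hookrightarrow \mathbb{R}$, which is automatically $\Q$-linear, and such a homomorphism is unique up to multiplication by a positive real. Now $\psi \circ (T|_{C})$ is again an order-preserving $\Q$-linear map $C \to \mathbb{R}$, so the rigidity clause forces
\[ \psi \circ (T|_{C}) = \lambda\, \psi \]
for some $\lambda > 0$. Concretely, fixing $c_{0} >_{A} 0$, the real number $\psi(Tc)/\psi(Tc_{0})$ is read off from the same Dedekind cut $\{ p/q : p c_{0} <_{A} q c \}$ as $\psi(c)/\psi(c_{0})$, which pins down $\lambda = \psi(Tc_{0})/\psi(c_{0})$.

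Finally I would extract the eigenvalue. Extending scalars to $\mathbb{R}$, the relation $\psi \circ (T|_{C}) = \lambda \psi$ says that the nonzero functional $\psi$ is an eigenvector of the transpose of $T|_{C} \otimes id_{\mathbb{R}}$ with eigenvalue $\lambda$; equivalently $T|_{C}$ acts by $\lambda$ on the one-dimensional quotient by the $T$-invariant hyperplane $\ker \psi$. Hence $\lambda > 0$ is a real eigenvalue of $T|_{C}$, and since $C$ is $T$-invariant its characteristic polynomial divides that of $T$, so $\lambda$ is a positive real eigenvalue of $\theta \otimes id_{\Q}$ as required. The genuinely non-formal input, and the main obstacle, is the Archimedean case: everything hinges on H\"older's theorem and the rigidity (uniqueness up to positive scaling) of the embedding into $\mathbb{R}$, which is exactly what converts the purely order-theoretic hypothesis into the positivity of an honest eigenvalue. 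The bookkeeping reduction to this case --- checking that convex subgroups are $\Q$-subspaces and that $T$ preserves the canonical one --- is routine by comparison.
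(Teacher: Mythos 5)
Your argument is correct. Note, however, that the paper offers no proof of this lemma to compare against: it is quoted verbatim from Clay--Rolfsen \cite{cr} (``The key lemma shown in \cite{cr}''), so the only available comparison is with the argument in that reference. Your proof uses exactly the standard ingredients that make the lemma work --- the chain of convex subgroups of a finitely generated ordered $\Q$-vector space, reduction to an Archimedean piece, and H\"older's theorem together with its rigidity clause (uniqueness of the embedding into $\mathbb{R}$ up to a positive scalar), which converts the order-preservation hypothesis into $\psi\circ T=\lambda\psi$ with $\lambda>0$. The one cosmetic difference from the route suggested by the machinery the paper does cite from \cite{cr} (namely \cite[Lemma 2.2]{cr}, which produces the \emph{maximal proper} convex subgroup $C$ with Archimedean quotient $A/C$) is that you work with the \emph{minimal nonzero} convex subgroup, which is Archimedean as a subgroup; either choice yields a $T$-invariant one-dimensional piece over $\mathbb{R}$ on which $T$ acts by $\lambda$, and hence a positive real root of the characteristic polynomial. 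Two small points worth tightening: the hyperplane $\ker\psi$ only exists after extending scalars to $\mathbb{R}$ (on $C$ itself $\psi$ is injective), which you do acknowledge; and one should say explicitly that the characteristic polynomial of $T|_{C}\otimes id_{\mathbb{R}}$ equals that of $T|_{C}$ over $\Q$, so the real eigenvalue $\lambda$ is indeed a root of the rational characteristic polynomial of $\theta\otimes id_{\Q}$. Neither is a gap.
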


Thus, $\chi_{A}(t)$ has at least one positive real root. 
The classical Alexander polynomial $\Delta_{M}^{\phi}(t)$ is a characteristic polynomial of $\theta_{*} : H_{1}(F;\Q) \rightarrow H_{1}(F;\Q)$. By Lemma \ref{lem:maxoabel}, we have a $\theta_{*}$-invariant direct sum decomposition $H_{1}(F;\Q) = V_{G} \oplus (A \otimes \Q)$, hence $\Delta_{M}^{\phi}(t)$ divides $\chi_{A}(t)$. Therefore we conclude that $\Delta_{M}^{\phi}(t)$ has at least one positive real root.

The above argument shows that the important part of the Alexander polynomial which contains information about bi-ordering is not the Alexander polynomial itself, but its factor $\chi_{A}(t)$, the characteristic polynomial of the monodromy map $\theta_{*}^{A}$ induced on the maximal ordered abelian quotient.

\subsection{The failure of twisted Alexander polynomial argument}

Now we are ready to explain why we cannot get better obstruction by using the twisted Alexander polynomials. According to the strategy described in section 3.2, we expect the following result.

\begin{ethm}
Let $\rho: \pi_{1}(M) \rightarrow \GL (V)$ be a finite representation. If $\pi_{1}(M)$ is bi-orderable, then the twisted Alexander polynomial $\Delta_{M}^{\rho\otimes \phi}(t)$ has at least one positive real root.   
\end{ethm}

However, it is easy to see such a generalization is impossible because the property that a polynomial has at least one positive real root is not preserved under the factorization of the polynomial. so the step 5 in our strategy does not work. 
Now one might expect another generalization, which is more likely to hold.

\begin{ethm}
Let $\rho: \pi_{1}(M) \rightarrow \GL (\Q G)$ be a regular finite presentation. If $\pi_{1}(M)$ is bi-orderable, then the twisted Alexander polynomial $\Delta_{M}^{\rho\otimes \phi}(t)$ has more positive real roots than the classical Alexander polynomial. 
\end{ethm}

However as the following example shows, this is also not true.

\begin{exam}[The simplest counter example]
Let $K$ be the figure-eight knot. $K$ is fibered, and its Alexander polynomial $\Delta_{K}(t)=t^{2}-3t+1$ has two positive real roots. Thus, $\pi_{1}(S^{3}-K)$ is bi-orderable by Perron-Rolfsen's criterion \cite{pr}.
Now let us consider the regular representation $\alpha = \pi_{1}(S^{3}-K) \rightarrow \GL(\Q  \Z_{2})$. Then $\Delta_{K}^{\alpha} = (t^{2}+3t+1)(t^{2}-3t+1)$, which has exactly the same positive real roots as the classical Alexander polynomial.
\end{exam}

Now it is easy to see why these expected generalization is impossible.

\begin{proof}[Proof of Theorem \ref{thm:main-alex}]

Let $\beta: \pi_{1}(M) \rightarrow G \rightarrow \GL (V)$ be a finite representation and $\alpha: \pi_{1}(M) \rightarrow G \rightarrow \GL (\Q G)$ be a regular finite representation.
By Lemma \ref{lem:sumrep}, the twisted Alexander polynomial $\Delta^{\beta\otimes \phi}_{M}(t)$ is a factor of $\Delta^{\alpha \otimes \phi}(t) ^{k}$ for sufficiently large $k>0$. See Remark \ref{rem:rep} again. Thus, it is sufficient to consider the regular finite representation $\alpha$.

Assume that $\pi_{1}(M)$ has a bi-ordering $<_{M}$.  
Let $\tM$ be the corresponding $G$-cover and $\widetilde{\phi} : \pi_{1}(\tM) \rightarrow \Z$ be the surjective homomorphism induced by the induced fibration map $\tM \rightarrow S^{1}$. This map is not the same as the pull-back homomorphism $p^{*}\phi$, but there exists an integer $d$ such that $p^{*}\phi = d\cdot\widetilde{\phi}$.

Let $\Sigma$, $\tS$ be the fiber of $M$, $\tM$ respectively and put $F = \pi_{1}(\Sigma)$, $\tF= \pi_{1}(\tS)$. 
$\tS$ is also a regular finite covering of $\Sigma$, hence $\tF$ is normal subgroup of $F$ having finite index in $F$. We regard all of $F$, $\tF$ and $\pi_{1}(\tM)$ as subgroups of $\pi_{1}(M)$ and let $<_{F},<_{\tF}$ be the restrictions of the bi-ordering $<_{M}$ to $F,\tF$ respectively. Then by Lemma \ref{lem:key}, the orderings $<_{F}$, $<_{\tF}$ are preserved by the monodromy map $\theta$, $\ttheta$ respectively.

By Shapiro's lemma, the twisted Alexander polynomial associated to $\alpha$ is given as the classical Alexander polynomial 
\[ \Delta^{\alpha \otimes \phi}_{M}(t)= \Delta^{\varepsilon \otimes p^{*}\phi}_{\tM}(t). \]
On the other hand, since $p^{*}\phi = d\cdot\widetilde{\phi}$, by Lemma \ref{lem:changeabel} we get an equality
\[ \Delta^{\alpha \otimes \phi}_{M}(t)= \Delta^{\varepsilon \otimes p^{*}\phi}_{\tM}(t) = \Delta^{\widetilde{\phi}}_{\tM}(t^{d}). \]
For fibered 3-manifolds, the classical Alexander polynomial is given as the characteristic polynomial of the monodromy. So finally we get a description of twisted Alexander polynomial as the characteristic polynomial of the monodromy $\ttheta$,
\[ \Delta^{\alpha \otimes \phi}_{M}(t) =\Delta^{\widetilde{\phi}}_{\tM}(t^{d}) = det(t^{d} I -  \ttheta_{*} ). \]

Now from Clay-Rolfsen's argument, the factor of $\Delta^{\alpha \otimes \phi}_{M}(t)$ which contains information about bi-ordering is the characteristic polynomial of the induced map 
\[ \ttheta_{*}^{A}: A(\tF;<_{\tF}) \rightarrow A(\tF; <_{\tF}). \]
Since $\tF$ is a finite index subgroup of $F$, by Theorem \ref{thm:main} we get a commutative diagram
\[ \xymatrix{
 A(\tF;<_{\tF})\otimes \Q \ar[r]^{\ttheta_{*}^{A}} \ar[d]^{\cong}& A(\tF; <_{\tF})\otimes \Q \ar[d]_{\cong} \\
 A(F;<_{F}) \otimes \Q\ar[r]^{\theta_{*}^{A}} & A(F; <_{F})\otimes \Q
}
\] 
where the vertical arrows are isomorphisms.
Thus, the essential factor of the twisted Alexander polynomial $\Delta^{\alpha \otimes \phi}_{M}(t)$ which has information of the bi-ordering $<_{M}$, equals to the essential part of the classical Alexander polynomial, which gives rise to Clay-Rolfsen's obstruction. 

\end{proof}

\end{document}